 \newtheorem{thm}{Theorem}[section]
 \newtheorem{lem}[thm]{Lemma}
 \newtheorem{prop}[thm]{Proposition}
 \theoremstyle{definition}
 \newtheorem{defn}[thm]{Definition}
 \theoremstyle{remark}
 \newtheorem{rem}[thm]{Remark}
 \newtheorem{exmp}[thm]{Example}
 \numberwithin{equation}{section}
\begin{document}
\title[Maximum Independent Set of Clique]
 {Maximum Independent Set of Cliques and The Generalized Mantel's Theorem}

\author{Hossein Teimoori Faal}

\address{Department of Mathematics and Computer Science, Allameh Tabataba'i University, Tehran, Iran}

\email{Hossein.teimoori@atu.ac.ir}

\subjclass{}

\keywords{}

\date{}


\commby{}

\begin{abstract}

A complete subgraph of any simple graph $G$ on $k$
vertices is called a $k$-\emph{clique} of $G$.  
In this paper, we first introduce the concept of 
the value of a $k$-clique ($k>1$) as an extension of the 
idea of the degree of a given vertex. Then, we obtain 
the generalized version of handshaking lemma which
we call it clique handshaking lemma. The well-known
classical result of Mantel states that the maximum number 
of edges in the class of triangle-free graphs with $n$
vertices is equal to $\frac{n^{2}}{4}$. Our main goal 
here is to find an extension of the above result 
for the class of $K_{\omega+1}$-free graphs, using the 
ideas of the value of cliques and the clique handshaking lemma.

\end{abstract}
\maketitle

\section{INTRODUCTION}

Finding the \emph{maximum} values of some key \emph{invariants} in discrete structures with 
\emph{forbidden} (finite) family 
of substructures is an interesting problem 
in the area of \emph{extremal combinatorics} 
with potential applications in 
theoretical and applied computer science.  
One of the classical problems of these kind 
is the well-known \emph{Mantel's} theorem \cite{Mant(1907)}
which answers the question about the maximum number of 
edges in any simple graph in which the family 
of forbidden subgraphs consists of only the triangle 
graph $K_{3}$. 
There are many interesting proofs of 
the well-known Mantel's theorem 
and one of those beautiful proofs are based on the idea of 
\emph{maximum} independent set of vertices.  
Roughly speaking, the basic idea is to partition the vertex set of a given graph $G=(V,E)$ into two sets 
$A$ and $B$. The first set $A$ is an independent set of 
maximum size (maximum number of vertices) and 
$B$ is the rest of vertices. Now, considering the maximality of $A$, the \emph{triangle-freeness} 
of $G$ and the well-known \emph{handshaking} 
lemma, one can bound (an upper bound) the number of edges 
based on the sum of degrees of vertices lie 
in $B$. Finally combining all previous findings 
with the well-known 
\emph{arithmetic-geometric mean} inequality 
we get the classical Mantel's theorem.    
\\
It seems that we can extend the idea of the independent set of vertices (maximum independent set)
to the independent set of edges (maximum matching). 
Then, using all the previous machinery, one can get 
a generalization of Mantel's result for the 
class of $K_{4}$-free graphs which we call it 
\emph{Edge Mantel's theorem}.  
Next, we generalize the concept of the degree of 
a vertex to a higher $k-$clique ($k>1$) by introducing 
the idea of the \emph{value of a clique}. This simply 
means that a value of a clique can be defined as
the number of common neighbors of it's vertices.
In this direction, we also obtain a higher clique 
generalization of the handshaking lemma which we 
call it \emph{clique handshaking lemma}. 
Finally, using the same machinery introduced for 
proving the classical Mantel's theorem, we obtain 
the \emph{Clique Mantel's theorem}.

\section{Basic Definitions and Notations}

Throughout this paper, we will assume that our graphs are 
finite, simple and undirected. For terminologies 
which are not defined here, one can refer to the book
\cite{West(2001)}. 
\\
For a give graph $G=(V,E)$, the vertex set and the edge set will be denoted by $V(G)$ and $E(G)$, respectively. 
For a vertex $v \in V(G)$, it's \emph{open neighborhood} 
denoted by $N_{G}(v)$
is the set of vertices \emph{adjacent} to $v$. 
A subgraph of $G$ consisting of all those
vertices that are pairwise adjacent is called 
a \emph{complete} subgraph (clique) of $G$.
A complete subgraph with $k$ vertices will 
be called a $k$-\emph{clique}. The set 
of all $k$-cliques in $G$ is denoted by $\Delta_{k}(G)$. 
We will also denote the number of $k$-cliques of a graph 
$G$ by $c_{k}(G)$. 
A complete subgraph on three vertices is called
a \emph{triangle}. A subset of vertices with
no edges among them is called an \emph{independent} set 
of $G$.   
\\
A generalization of the concept of the \emph{degree} of a vertex can be extended to the  
\emph{edge value}, as follows.  

\begin{defn}
	For a given graph $G=(V,E)$ and an edge $e=\{u,v\} \in E(G)$, the value 
	of $e$ denoted by $val_{G}(e)$ is defined as the number of common neighbors of two 
	end vertices $u$ and $v$ of the edge $e$. 
	More precisely, we have 
	$$
	val_{G}(e) = 
	\Big\vert N_{G}(u) \cap N_{G}(v) \Big\vert.  
	$$
\end{defn}

\begin{rem}
	It is interesting to note that 
	a definition similar to the edge value has been 
	given in the literature \cite{PavNest(2004)}. Indeed, the \emph{co-degree} of 
	two vertice $u, v \in V(G)$, not necessarily adjacent, 
	is defined as the number of their common neighborhoods. 
\end{rem}

\begin{exmp}
	Let $G=(V,E)$ be a graph depicted in Figure \ref{fig:edgeval}. 
	Then, the values of edges are, as follows. 
	
	\begin{equation}
	val_{G}(e_{12})=val_{G}(e_{13})=val_{G}(e_{23})
	=1,\hspace{0.4cm} val_{G}(e_{34})=0.
	\end{equation}	
\end{exmp}

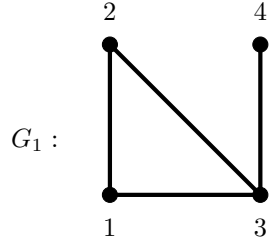
\begin{figure}
	\begin{center}
		\begin{tikzpicture}
		\draw [ultra thick](-6,0)-- (-6,2) --(-4,0) -- (-6,0);
		\draw [ultra thick](-4,0)-- (-4,2);
		\draw [fill] (-6,0) circle [radius=0.1];
		\draw [fill] (-6,2) circle [radius=0.1];
		\draw [fill] (-4,0) circle [radius=0.1];
		\draw [fill] (-4,2) circle [radius=0.1];
		\node [below] at (-7,1) {$G_{1}:$};
		\node [below] at (-6,-0.20) {$1$};
		\node [above] at (-6,2.2) {$2$};
		\node [below] at (-4,-0.20) {$3$};
		\node [above] at (-4,2.2) {$4$};
		\end{tikzpicture}	
		\caption{The values of edges for the graph $G_{1}$}\label{fig:edgeval}
	\end{center}
\end{figure}

Next, we generalize the above idea for any $k$-clique $q_{k} \in \Delta_{k}(G)$ $(k>1)$ of $G$. 

\begin{defn}
	[Value of a Clique]
	Let $G=(V,E)$ be a simple graph and $q_{k}$ be a $k$-clique of $G$. Then, we define 
	the \emph{value} of $q_{k}$ denoted by $val_{G}(q_{k})$, as follows 
	\begin{equation}
	val_{G}(q_{k}) = 
	\Big\vert 
	\bigcap_{v \in V(q_{k})} N_{G}(v)  
	\Big\vert.  
	\end{equation}
	
\end{defn} 

As an extension of the well-known \emph{handshaking lemma}, we have the following key result.

\begin{lem}
	[Clique Handskaing Lemma]
	
	For a simple graph $G=(V,E)$, we have 
	
	\begin{equation}\label{cliqhand1}
	\sum_{q_{k} \in \Delta_{k}(G)} val_{G}
	(q_{k}) = (k+1) c_{k+1}(G), \hspace{0.4cm}
	(k \geq 1).  
	\end{equation}

\end{lem}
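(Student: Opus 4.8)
The plan is to prove the Clique Handshaking Lemma by a \emph{double counting} argument, counting in two different ways the number of incidences between $k$-cliques and the $(k+1)$-cliques that contain them. The key observation is the following bijective correspondence: a $(k+1)$-clique $q_{k+1}$ is completely determined by choosing a $k$-clique $q_k$ together with an extra vertex $w$ that is adjacent to every vertex of $q_k$. Indeed, if $w \in \bigcap_{v \in V(q_k)} N_G(v)$, then $w$ is adjacent to all $k$ vertices of $q_k$, and since those $k$ vertices are already pairwise adjacent, the vertex set $V(q_k) \cup \{w\}$ induces a complete subgraph on $k+1$ vertices, i.e.\ a $(k+1)$-clique. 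Conversely, any such extension vertex must lie in this common neighborhood. This is exactly why $val_G(q_k)$ counts the number of ways to extend $q_k$ to a $(k+1)$-clique.

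First I would define the incidence set
$$
I = \set{(q_k, q_{k+1}) : q_k \in \Delta_k(G),\ q_{k+1} \in \Delta_{k+1}(G),\ V(q_k) \subset V(q_{k+1})},
$$
and compute $\abs{I}$ by summing over each coordinate separately. Summing over the first coordinate, for a fixed $k$-clique $q_k$ the number of $(k+1)$-cliques containing it equals the number of valid extension vertices, which by the correspondence above is precisely $val_G(q_k)$. Hence
$$
\abs{I} = \sum_{q_k \in \Delta_k(G)} val_G(q_k),
$$
which is the left-hand side of \eqref{cliqhand1}.

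Next I would sum over the second coordinate. For a fixed $(k+1)$-clique $q_{k+1}$, the number of $k$-cliques $q_k$ it contains equals the number of ways to delete a single vertex from $q_{k+1}$: since $q_{k+1}$ is complete on $k+1$ vertices, every $k$-element subset of its vertex set induces a $k$-clique, and there are exactly $\binom{k+1}{k} = k+1$ such subsets. Therefore
$$
\abs{I} = \sum_{q_{k+1} \in \Delta_{k+1}(G)} (k+1) = (k+1)\, c_{k+1}(G),
$$
which is the right-hand side of \eqref{cliqhand1}. Equating the two expressions for $\abs{I}$ completes the proof.

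The argument is essentially routine once the incidence set is set up correctly, so I do not anticipate a serious obstacle. The one point that requires care—and which I would state explicitly—is verifying the clique-extension correspondence in \emph{both} directions: that every common neighbor of the vertices of $q_k$ genuinely produces a $(k+1)$-clique (using that $q_k$ is already complete), and that every $(k+1)$-clique containing $q_k$ arises this way from a unique common neighbor. Establishing this equivalence is what makes the first summation equal $\sum val_G(q_k)$ rather than some other quantity, and it is the crux on which the whole identity rests. I would also remark that the case $k=1$ recovers the classical handshaking lemma, since $val_G$ of a $1$-clique (a vertex) is its degree and $2\,c_2(G)$ is twice the number of edges.
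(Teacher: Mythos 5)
Your proof is correct and follows essentially the same double-counting argument as the paper: both define the incidence set of pairs $(q_k, q_{k+1})$ with $q_k$ contained in $q_{k+1}$ and count it by fixing each coordinate in turn. Your version is slightly more careful in verifying the extension-vertex correspondence, but the approach is identical.
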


One can given several proofs of the above lemma. Here, we 
present a proof which based on the idea of 
\emph{double counting}. 

\begin{proof}
	Let $G=(V,E)$ be any simple graph. Define the set 
	$I_{k}(G)$, as follows. 
	
	\begin{equation}
	I_{k}(G)
	=
	\{ 
	(q_{k}, q_{k+1}) \in \Delta_{k}(G) \times 
	\Delta_{k+1}(G)
	~\vert~ q_{k}~\text{is a subgraph of}~q_{k+1}
	\}.
	\end{equation}	
	The proof proceeds by counting the set $I_{k}(G)$
	in two ways. 
	\\
	{\bf Case I.} We first fix the clique $q_{k}$. 
	Then, it is clear that the number of such $(k+1)$-cliques 
	is exactly $val_{G}(q_{k})$. Now, summing over all those $k$-cliques $q_{k}$ will result in 
	$
	\sum_{q_{k} \in \Delta_{k}(G)} val_{G}
	(q_{k})
	$	
	.
	\\
	{\bf Case II.}	
	Next, we fix the $(k+1)$-clique $q_{k+1}$. Then, it is 
	obvious that the number of such $k$-cliques 
	which are the subgraph of $q_{k+1}$ is equal to 
	$k+1$. Thus, by summing over all $(k+1)$-cliques,
	we get $(k+1)c_{k+1}(G)$. 
	\\
	Finally, the proof is complete by the 
	\emph{double-counting} technique.  
\end{proof}

\begin{rem}
	In is worthy to note that the above lemma is 
	called the \emph{transfer equations} by Knill in \cite{PavNest(2004)} which is even true 
	for the generalized discrete structures like 
	\emph{simplicial complexes}. The transfer equations are used 
	to obtain a \emph{graph-theoretical} version of 
	the well-known \emph{Gauss-Bonnet} formula. 
\end{rem}

\section{Main Results}

In this section, we will use the idea of 
the clique value to generalize the following
clique-counting inequality due to Mantel
\cite{Mant(1907)}.

\begin{thm}
	[
	Mantel's Theorem for Triangle-free graphs
	]
	For a given triangle-free graph
	$G=(V,E)$ with $n$ vertices and 
	$m$ edges, we have
	\begin{equation*}
	m 
	\leq 
	\frac{n^2}{4}.
	\end{equation*}
\end{thm}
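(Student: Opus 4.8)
The plan is to reproduce the elegant maximum-independent-set argument sketched in the introduction, since the paper explicitly advertises this as the template it will generalize. First I would take a triangle-free graph $G=(V,E)$ on $n$ vertices with $m$ edges and let $A \subseteq V(G)$ be an independent set of \emph{maximum} cardinality. The key structural observation is that, because $G$ is triangle-free, the open neighborhood $N_{G}(v)$ of every vertex $v$ is itself an independent set: any two neighbors of $v$ cannot be adjacent, else they would form a triangle with $v$. By maximality of $A$, each such independent neighborhood $N_{G}(v)$ has size at most $\abs{A}$, so I obtain the vertex-wise bound $\deg_{G}(v) \leq \abs{A}$ for every $v$.

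Next I would set $B = V(G) \setminus A$ and argue that every edge of $G$ has at least one endpoint in $B$. Indeed, $A$ is independent, so no edge lies entirely inside $A$; hence each edge meets $B$. This lets me bound the edge count by the degree sum over $B$ alone:
\begin{equation*}
m = \abs{E(G)} \leq \sum_{v \in B} \deg_{G}(v) \leq \abs{B}\cdot\abs{A}.
\end{equation*}
Here the first inequality is the observation that summing degrees over $B$ counts every edge at least once (each edge has an endpoint in $B$), and the second uses the uniform degree bound from the first paragraph applied to each of the $\abs{B}$ vertices.

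Finally I would close with the \emph{arithmetic--geometric mean} inequality, exactly as the introduction promises. Writing $a = \abs{A}$ and $b = \abs{B}$, so that $a + b = n$, the bound reads $m \leq ab$, and AM--GM gives $ab \leq \big(\tfrac{a+b}{2}\big)^{2} = \tfrac{n^{2}}{4}$. Chaining these yields $m \leq \tfrac{n^{2}}{4}$, which is the claim.

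The step I expect to require the most care is the degree-sum inequality $m \leq \sum_{v \in B}\deg_{G}(v)$. The subtlety is that edges with one endpoint in $A$ and one in $B$ are counted exactly once, while edges with both endpoints in $B$ are counted twice, so the sum genuinely overcounts; what makes the inequality valid (rather than an equality) is precisely that no edge is \emph{under}counted, which in turn relies on the independence of $A$ guaranteeing no edge is missed. I would state this overcounting explicitly so the direction of the inequality is transparent, and I would note that the uniform bound $\deg_{G}(v)\leq\abs{A}$ being applied to \emph{all} vertices (including those in $B$) is legitimate because the triangle-free neighborhood argument holds for every vertex, not merely those in $A$.
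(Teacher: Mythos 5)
Your proof is correct and follows essentially the same route as the paper: maximum independent set $A$, the observation that triangle-freeness makes every neighborhood independent (hence $\deg_G(v)\leq\abs{A}$), the bound $m\leq\sum_{v\in B}\deg_G(v)$, and the arithmetic--geometric mean inequality. The only cosmetic difference is that you justify $m\leq\sum_{v\in B}\deg_G(v)$ by directly noting every edge meets $B$, whereas the paper derives it from the handshaking lemma together with $\sum_{v\in A}\deg_G(v)\leq m$; the two arguments are equivalent.
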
 

The motivation of this paper originates from 
the proof of the above classical result which is based 
on the idea of \emph{maximality}. Thus, we 
also include the proof. 
\begin{proof}
	Let $A \subseteq V(G)$ be an independent set of 
	maximum size (a maximum independent set). Next, we 
	put 
	$
	B=V(G)-A
	$
	.
	Since $G$ is triangle-free, the open neighborhood of any 
	arbitrary vertex  
	$v \in V(G)$ is an independent set. Hence, 
	by the maximality of $A$, we immediately conclude 
	that
	
	\begin{equation}\label{keyind1}
	\deg_{G}(v) = \vert N_{G}(v) \vert  
	\leq \vert A \vert, \hspace{0.5cm}\forall~v\in V(G). 
	\end{equation}\label{keyind2}
	On the other hand, since $A$ is an independent set of vertices,
	we obviously have 
	\begin{equation}\label{keyident1}
	\sum_{v \in A}\deg_{G}(v) \leq m. 
	\end{equation} 
	
	Considering the well-known handshaking lemma, we also 
	get 
	\begin{equation}\label{keyident2}
	\sum_{v \in A}\deg_{G}(v) + \sum_{v \in B}\deg_{G}(v)
	= 2m. 
	\end{equation}
	
	Form identities (\ref{keyident1}) and (\ref{keyident2}),
	we conclude that
	\begin{equation}\label{keyinequalt2}
	m \leq \sum_{v \in B}\deg_{G}(v). 
	\end{equation}
	
	Thus, from relations (\ref{keyind1}),
	(\ref{keyinequalt2}) and 
	the \emph{arithmetic-geometric mean} 
	inequality, we finally obtain 
	
	\begin{eqnarray}
	m & \leq &  \sum_{v \in B}\deg_{G}(v)\nonumber\\
	& \leq & \sum_{v \in B} \vert A \vert\nonumber\\
	& = & \vert A \vert \vert B \vert\nonumber\\
	& \leq & \Big(\frac{\vert A \vert 
		+ \vert B \vert}{2} \Big)^{2} \nonumber\\
	& \leq & \frac{n^{2}}{4},\nonumber 
	\end{eqnarray}
	as required. 
	
\end{proof}

\begin{rem}
	It is important to note that from the 
	arithmetic-geometric mean inequality in the above proof
	it immediately follows that the \emph{extremal graph} 
	for Mantel's classical result is the balanced 
	complete bipartite graph $K_{\frac{n}{2},\frac{n}{2}}$.
	But here, we are only interested in extremal bounds
	(inequalities) and 
	not the extremal graphs themselves. 
\end{rem}

The next result is a slight generalization of the
Mantel's theorem and is based on the idea of 
the edge values and the \emph{maximum matchings} 
in graphs.

\begin{thm}\label{edgeMant1}
	[
	Edge Mantel's Theorem
	]
	For a given 
	$
	K_{4}
	$
	-free graph
	$G=(V,E)$ with $m$ edges and 
	$t$ triangles, we have
	\begin{equation*}
	t
	\leq 
	\frac{m^2}{8}.
	\end{equation*}
\end{thm}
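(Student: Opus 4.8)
The plan is to transport the proof of the classical Mantel theorem one dimension upward: vertices become edges and edges become triangles, the maximum independent set of vertices is replaced by a maximum matching (a maximum independent set of edges), and the ordinary handshaking lemma is replaced by the Clique Handshaking Lemma (\ref{cliqhand1}) in the case $k=2$, which reads $\sum_{e \in \Delta_2(G)} val_{G}(e) = 3\,c_3(G) = 3t$. All the algebraic scaffolding of the classical argument then has a natural analogue, and the only genuinely new input is the way $K_4$-freeness enters.

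First I would fix a maximum matching $M \subseteq E(G)$ and set $F = E(G)\setminus M$, so that $\abs{M}+\abs{F}=m$. The role played by the independence of $A$ in the classical proof is now played by the following elementary remark: since the edges of a matching are pairwise vertex-disjoint, no triangle of $G$ can contain two edges of $M$. Counting incidences between triangles and edges of $M$ therefore gives $\sum_{e \in M} val_{G}(e) \le t$, the exact analogue of $\sum_{v\in A}\deg_{G}(v)\le m$. Subtracting this from the Clique Handshaking identity $\sum_{e\in E} val_{G}(e)=3t$ yields $\sum_{e\in F} val_{G}(e) \ge 2t$, which mirrors the step $m \le \sum_{v\in B}\deg_{G}(v)$.

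The heart of the matter is the analogue of the bound $\deg_{G}(v)\le\abs{A}$. Here I would use $K_4$-freeness: for any edge $e=\{u,v\}$ the common neighborhood $N_{G}(u)\cap N_{G}(v)$ is an \emph{independent} set of vertices, since two adjacent common neighbors would close a $K_4$ on $\{u,v\}$. The aim is to convert this independence, together with the maximality of $M$, into the estimate $val_{G}(e)\le\abs{M}$ for every $e\in F$, and hence $\sum_{e\in F} val_{G}(e)\le \abs{F}\,\abs{M}$. Feeding this into the previous inequality gives $2t \le \abs{F}\,\abs{M}$, and a final application of the arithmetic--geometric mean inequality $\abs{F}\,\abs{M}\le \big(\tfrac{\abs{F}+\abs{M}}{2}\big)^{2}=\tfrac{m^{2}}{4}$ delivers $t \le \tfrac{m^{2}}{8}$. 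The extra factor $\tfrac12$ relative to the classical $\tfrac{n^{2}}{4}$ is exactly the arithmetic shadow of the coefficient $k+1=3$ in (\ref{cliqhand1}) taking the place of the $2$ in ordinary handshaking.

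The step I expect to be the genuine obstacle is precisely the passage from the independence of the common neighborhood to a usable bound on $val_{G}(e)$. Independence by itself only bounds $val_{G}(e)$ by the vertex-independence number $\alpha(G)$, which in general strictly exceeds the matching number $\abs{M}$ (already for a book of triangles on a common edge), so a naive pointwise claim $val_{G}(e)\le\abs{M}$ cannot rest on $K_4$-freeness alone. The argument must genuinely exploit the \emph{maximality} of $M$ -- for instance through an alternating/augmenting-path argument that assigns to the common neighbors of $e$ distinct edges of $M$ -- or else replace the pointwise bound by an averaged estimate over $F$ that still secures $\sum_{e\in F} val_{G}(e)\le \abs{F}\,\abs{M}$. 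Making this assignment precise is where the real work lies, and it is the one place where both hypotheses, $K_4$-freeness and the maximality of the matching, must be brought to bear simultaneously.
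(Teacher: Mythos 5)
Your proposal reproduces the paper's argument step for step: your $M$ and $F$ are the paper's $A$ and $B$, your observation that no triangle contains two matching edges is inequality (\ref{keyinequt1}), the subtraction from the $k=2$ case of the clique handshaking lemma is (\ref{mainIneq}), and the closing AM--GM computation is identical. The one step you decline to carry out --- converting the independence of $N_{G}(u)\cap N_{G}(v)$ into the pointwise bound $val_{G}(e)\le\abs{M}$ --- is exactly the step the paper does take, as inequality (\ref{keyinq1}), and your suspicion about it is justified: the pointwise bound is false. In the book $K_{1,1,p}$ (an edge $uv$ together with $p$ pairwise nonadjacent common neighbours $w_{1},\dots,w_{p}$) the graph is $K_{4}$-free, every edge meets $\{u,v\}$, so the matching number is $2$, yet $val_{G}(uv)=p$, and $uv$ lies outside every maximum matching as soon as $p\ge 2$. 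What $K_{4}$-freeness gives is that $N_{G}(u)\cap N_{G}(v)$ is an independent set of \emph{vertices}, which bounds $val_{G}(e)$ by the vertex independence number $\alpha(G)$, not by the matching number $\nu(G)=\abs{M}$; the paper's phrase ``independent set of edges'' conflates these two notions, and the maximality of a matching yields no comparison between a set of vertices and a set of edges. No alternating-path argument can repair the pointwise claim, because in the book example there simply is no matching of size $p$ to compare against.

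So the gap you flag is genuine, and it is not closed in the paper either: the paper's proof of Theorem \ref{edgeMant1} breaks at (\ref{keyinq1}). The averaged fallback you propose, $\sum_{e\in F} val_{G}(e)\le\abs{M}\,\abs{F}$, would indeed suffice and survives the book example ($3p-2\le 2(2p-1)$), but neither you nor the paper proves it, and it does not follow from the stated hypotheses by the stated reasoning. Until that estimate (or some substitute for the pointwise bound) is actually established, both your proposal and the paper's proof are incomplete; the theorem itself is presumably safe, since for $K_{4}$-free graphs much stronger bounds of order $m^{3/2}$ are known, but those require a genuinely different argument.
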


\begin{proof}
	Let
	$A$ be the 
	maximum independent set of 
	edges ( the maximum matching ); that is, 
	the edges with no common end points. 
	.
	Put 
	$
	B = E(G)
	- A 
	$.
	Since 
	$G$ is a 
	$
	K_{4}
	$
	-free graph, $N_{G}(u) \cap N_{G}(v)$ is an independent set of edges for any edge $e=uv \in E(G)$.  
	Therefore, by maximality of $A$,
	we have 
	\begin{equation}\label{keyinq1}
	val_{G}(e) = \vert N_{G}(u) \cap N_{G}(v)  \vert 
	\leq \vert A \vert.
	\end{equation}
	Next, we observe that
	since $A$ is an independent set of edges, we clearly
	have 
	\begin{equation}\label{keyinequt1}
	\sum_{e \in A}val_{G}
	(e) \leq 
	t.
	\end{equation}
	On the other hand, considering the clique handshaking 
	lemma (\ref{cliqhand1}) for $\omega(G)=3$ , we have
	
	\begin{equation}\label{trihand}
	\sum_{e \in A}val_{G}(e) + 
	\sum_{e \in B}val_{G}(e) = 
	3t. 
	\end{equation}
	
	Now, form formulas (\ref{keyinequt1})
	and (\ref{trihand}), we conclude that 
	
	\begin{equation}\label{mainIneq}
	2t \leq \sum_{e \in B}val_{G}(e). 
	\end{equation}

	Finally considering the arithmetic-geometric mean 
	inequality and the inequalities 
	(\ref{keyinq1}) and (\ref{mainIneq}), we  
	get
	\begin{eqnarray*}
		t 
		& \leq & 
		\frac{1}{2}\sum
		_
		{
			e \in B
		} 
		val_{G}(e) \nonumber\\
		& \leq & 
		\frac{1}{2}\sum
		_
		{
			e \in B
		} 
		\vert
		A
		\vert = \frac{1}{2}
		\vert
		A
		\vert
		\vert
		B
		\vert
		\nonumber\\
		& \leq &
		\frac{1}{2}\left(
		\frac{
			\vert
			A
			\vert
			+
			\vert
			B
			\vert
		}{2}
		\right)
		^{2}\nonumber\\
		& = & 
		\frac{m^2}{8}, 
	\end{eqnarray*}
	
	which completes the proof.

\end{proof}

Now, considering the idea of the value
of a clique and the clique handshaking lemma using similar arguments as above, 
we obtain the following generalization of Theorem \ref{edgeMant1}. 

\begin{thm}\label{MainThm1}
	[
	Clique Mantel's Theorem
	]
	For a given 
	$
	K_{\omega + 1}
	$
	-free graph
	$G=(V,E)$ with $c_{\omega-1}(G)$ cliques of 
	size $\omega(G) -1$
	and 
	$c_{\omega}(G)$ cliques
	of size 
	$
	\omega(G)
	$, we have
	\begin{equation*}
	c_{\omega}(G)
	\leq 
	\frac{1}{\omega(G)-1}.
	\frac{c^{2}_{\omega-1}(G)}{4}.
	\end{equation*}
	
\end{thm}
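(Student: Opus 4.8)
The plan is to mimic exactly the two earlier Mantel-type proofs, now one clique-level higher. Let $\omega = \omega(G)$. First I would let $A \subseteq \Delta_{\omega-1}(G)$ be a \emph{maximum independent set of $(\omega-1)$-cliques}, where I interpret ``independent'' in the sense dictated by the previous proofs: a family of $(\omega-1)$-cliques no two of which together span an $\omega$-clique (equivalently, the analogue of a matching one level up). I then set $B = \Delta_{\omega-1}(G) \setminus A$, so that $|A| + |B| = c_{\omega-1}(G)$.

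The key structural observation is the exact analogue of triangle-freeness: since $G$ is $K_{\omega+1}$-free, for any $(\omega-1)$-clique $q$ the set of $(\omega-1)$-cliques formed with the common neighbors in $\bigcap_{v \in V(q)} N_G(v)$ must itself be independent (no two of them can complete to an $\omega$-clique together with $q$, for that would force a $K_{\omega+1}$). Hence by maximality of $A$,
\begin{equation*}
val_G(q) \leq |A|, \qquad \forall\, q \in \Delta_{\omega-1}(G).
\end{equation*}
Next, because $A$ is independent, summing $val_G(q)$ over $q \in A$ counts each $\omega$-clique at most once, giving $\sum_{q \in A} val_G(q) \leq c_\omega(G)$. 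Meanwhile the clique handshaking lemma (\ref{cliqhand1}) with $k = \omega - 1$ reads
\begin{equation*}
\sum_{q \in A} val_G(q) + \sum_{q \in B} val_G(q) = \omega\, c_\omega(G),
\end{equation*}
so subtracting yields $(\omega - 1)\, c_\omega(G) \leq \sum_{q \in B} val_G(q)$.

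Finally I would apply the clique-value bound term by term and close with the arithmetic–geometric mean inequality, exactly as before:
\begin{equation*}
(\omega - 1)\, c_\omega(G) \leq \sum_{q \in B} val_G(q) \leq \sum_{q \in B} |A| = |A|\,|B| \leq \Big(\frac{|A| + |B|}{2}\Big)^2 = \frac{c_{\omega-1}^2(G)}{4},
\end{equation*}
and dividing by $\omega - 1$ gives the stated bound. The routine inequalities are immediate; the main obstacle is pinning down the correct notion of ``independent set of $(\omega-1)$-cliques'' so that both the maximality inequality $val_G(q) \leq |A|$ and the single-counting inequality $\sum_{q \in A} val_G(q) \leq c_\omega(G)$ hold simultaneously. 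I would verify this by checking that the chosen independence notion specializes correctly to the two proven cases ($\omega = 2$ giving the classical theorem, $\omega = 3$ giving the Edge Mantel theorem), since those fix the intended meaning and confirm the factor bookkeeping.
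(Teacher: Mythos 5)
Your reconstruction is the right one to attempt: the paper states this theorem with no written proof beyond the phrase ``using similar arguments as above,'' so the argument has to be supplied, and your outline supplies it correctly \emph{provided} you commit to your first definition of independence. With $A$ a maximum family of $(\omega-1)$-cliques no two of which together span an $\omega$-clique, each $\omega$-clique contains at most one member of $A$ (any two of its $(\omega-1)$-subcliques share $\omega-2$ vertices and jointly span it), which gives $\sum_{q\in A}val_G(q)\le c_\omega(G)$; combined with the handshaking identity (\ref{cliqhand1}) at $k=\omega-1$ and the bound $val_G(q)\le|A|$, the AM--GM step delivers the claimed inequality. To make the bound $val_G(q)\le|A|$ airtight, fix an $(\omega-2)$-subset $q'\subset q$ and associate to each common neighbor $w$ the $(\omega-1)$-clique $q'\cup\{w\}$; these are pairwise independent because $q'\cup\{w_i\}$ and $q'\cup\{w_j\}$ spanning an $\omega$-clique would force $w_iw_j\in E$ and hence a $K_{\omega+1}$ on $q\cup\{w_i,w_j\}$. (Your phrase ``the $(\omega-1)$-cliques formed with the common neighbors'' should not be read as cliques \emph{inside} the common neighborhood, which is an independent set of vertices by $K_{\omega+1}$-freeness and so contains no $(\omega-1)$-clique once $\omega\ge3$.)

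The one genuine danger is the parenthetical ``(equivalently, the analogue of a matching one level up)'': this is \emph{not} equivalent, and the matching-style notion (no two members sharing $\omega-2$ vertices) breaks the proof. Already at $\omega=3$, take the book graph: two adjacent vertices $u,v$ each joined to pairwise non-adjacent vertices $w_1,\dots,w_n$. It is $K_4$-free and $val_G(uv)=n$, yet every matching has at most two edges, so $val_G(e)\le|A|$ fails when $A$ is a maximum matching; under your primary notion $\{uw_1,\dots,uw_n\}$ is independent and the bound holds. This matters because the paper's own proof of the Edge Mantel theorem literally takes $A$ to be a maximum matching, so its inequality (\ref{keyinq1}) is unjustified as written; your planned sanity check against the $\omega=3$ case would therefore not ``confirm'' the matching reading but instead reveal that your first definition is the one that must be used at every level (it specializes at $\omega=2$ to an ordinary independent set of vertices, as required, and repairs the $\omega=3$ argument).
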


\section{Concluding Remarks and Future Works}

In this paper, we obtain an upper bound for 
the number $k$-clique in the class of $(k+1)$-cliques-free 
graphs; that is, the class of those graphs not containing
any complete subgraph of on $k+1$ vertices. 
The basic ideas were maximality, clique handshaking 
identity and using 
the arithmetic-geometric mean inequality. 
\\
Our future project is to consider a more general class 
of graphs that we will call them 
$\mathcal{H}$-free graphs. 
We recall that the \emph{increasing family} \cite{BresarImrichKlavzar(1998)} of 
graphs $\mathcal{H}$ 
is the following 
$$
\mathcal{H}= 
\{
H_{1}, H_{2}, \ldots, H_{k}, H_{k+1}, \ldots 
\},
$$
in which $H_{1}= K_{1}$ and each $H_{i}$
is an \emph{induced subgraph} of $H_{i+1}$, for all $i$. 
Our main \emph{goal} is to find an upper bound 
similar to that of Theorem \ref{MainThm1} for the maximum number 
of copies of $H_{k}$ in the class of those graphs 
not containing any subgraph \emph{isomorphic} 
to $H_{k+1}$ (for any integer $k>1$). 
To achieve this goal, we need two main steps.
We have to first define a similar notion of the value 
of a clique for any graph $H_{k}$ in $\mathcal{H}$.
Then, we need to find an analogue of our key lemma; 
the clique handshaking lemma. 
We will call this $\mathcal{H}$-handshaking lemma. 
The following result, due to $Kelly$ \cite{Tutte(1979)}, will play 
an essential role. 

\begin{prop}\label{Kellyprop1}
	Let $G=(V,E)$ be an $n$-vertex graph with no 
	\emph{isolated} vertices. 
	Then for any graph $H$ on $k$ vertices, we have 
	$$
	(n-k)s(H,G) = \sum_{v \in V} s(H,G-v),
	$$ 
	where $s(H,G)$ denotes the number of subgraphs 
	of $G$ isomorphic to $H$. 
\end{prop}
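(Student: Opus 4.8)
The plan is to prove the identity by a double-counting argument in the same spirit as the clique handshaking lemma above. I would introduce the set of incidence pairs
$$
P \;=\; \{\, (H', v) \;:\; H' \subseteq G,\ H' \cong H,\ v \in V(G),\ v \notin V(H') \,\},
$$
consisting of a copy of $H$ inside $G$ together with a vertex of $G$ lying outside that copy, and then count $|P|$ in two ways.

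First I would fix the copy $H'$. Since $H$ has $k$ vertices and isomorphisms preserve the number of vertices, every copy $H'$ occupies exactly $k$ vertices of $G$, so there are precisely $n-k$ choices of a vertex $v \in V(G) \setminus V(H')$. Summing over the $s(H,G)$ copies then gives $|P| = (n-k)\, s(H,G)$. Next I would fix the vertex $v$ instead. The copies $H'$ with $v \notin V(H')$ are exactly the copies of $H$ that survive the deletion of $v$, and I claim these are in bijection with the copies of $H$ in the vertex-deleted graph $G-v$, so that their number equals $s(H,G-v)$. Summing over all $v \in V$ yields $|P| = \sum_{v \in V} s(H,G-v)$, and equating the two counts produces the stated identity.

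The step that deserves the most care --- and the only real content of the argument --- is the bijection claimed in the previous paragraph. I would verify that deleting a vertex $v$ lying outside $V(H')$ leaves the copy $H'$ entirely intact inside $G-v$: removing $v$ deletes only edges incident to $v$, none of which belong to $H'$, so every copy of $H$ in $G$ avoiding $v$ is a copy of $H$ in $G-v$; conversely, every copy of $H$ in $G-v$ is already a copy of $H$ in $G$ that does not use $v$. This works verbatim whether $s(H,G)$ counts subgraphs or only \emph{induced} subgraphs, since in either case a copy is determined by its $k$ vertices together with the relevant edges among them, all of which are preserved under deletion of an outside vertex.

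Finally, I note that the no-isolated-vertices hypothesis on $G$ plays no role in the counting itself; the identity follows purely from the fact that each copy of $H$ occupies exactly $k$ vertices. The hypothesis is harmless to retain and is natural in the graph-reconstruction setting from which Kelly's result is drawn, where it guarantees that the vertex-deleted subgraphs carry the relevant information.
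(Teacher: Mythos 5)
Your proof is correct. The paper itself gives no proof of this proposition --- it is quoted from Kelly (via Tutte) as a tool for the planned sequel --- so there is no in-paper argument to compare against; but your double count of the incidence pairs $(H',v)$ is the standard proof and is precisely the technique the paper uses for its clique handshaking lemma, of which this proposition is the generalization: fixing the copy $H'$ gives the factor $n-k$ (since an isomorphic copy occupies exactly $k$ vertices), and fixing $v$ uses your bijection between copies of $H$ avoiding $v$ and copies of $H$ in $G-v$, which you verify carefully and which indeed works for subgraph counts and induced-subgraph counts alike. Your closing remark is also accurate: the no-isolated-vertices hypothesis is never used in the counting --- the identity holds for every graph $G$ and every $H$ on $k \leq n$ vertices --- and it appears in the statement only as an artifact of the graph-reconstruction setting from which Kelly's lemma is drawn.
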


Note that in particular case where $H_{k}$ is a 
$k$-clique, it is not hard to show that Proposition \ref{Kellyprop1} is equivalent to our 
clique handshaking lemma.  
We can also recover all the results of 
this paper in this special case. 
More details will appear in our 
sequel paper \cite{PavNest(2004)}.


\begin{thebibliography}{99}






\bibitem{Mant(1907)} W. Mantel,
Solution by H. Gouwentak, W. Mantel, J. Teixeira de Mattes, F. Schuh and W. A. Wytho), {\it Wiskundige Opgaven} 10: 60--61 (1907).





\bibitem{Tutte(1979)}
W. T. Tutte, All the king’s horses. A guide to reconstruction., {\it
In Graph theory and related topics (Proc. Conf., Univ. Waterloo, Waterloo, Ont.,
1977)}, pages 15–33. Academic Press, New York, 1979.





\bibitem{BresarImrichKlavzar(1998)} 
Boštjan Brešar, Wilfried Imrich and Sandi Klavžar,
Reconstructing subgraph-counting graph
polynomials of increasing families of graphs, 
{\it Discrete Mathematics.} 297: 159--166 (2005).






\bibitem{West(2001)}
D. B. West, Intorduction to Graph Theory (Second edition), {\it
Prentice-Hall }, 2001.

      


\bibitem{PavNest(2004)} O. Knill,
On Index Expectation and Curvature for Networks, {\it Oxford University Press} 2012.








\end{thebibliography}
\end{document}